\theoremstyle{plain}
  \newtheorem{theorem}{Theorem}[section]
  \newtheorem{lemma}[theorem]{Lemma}
  \newtheorem{conjecture}[theorem]{Conjecture}
\theoremstyle{definition}
  \newtheorem{example}[theorem]{Example}
\newenvironment{acknowledgements}{\bigskip\textbf{Acknowledgements.}}{}
\newcommand{\sumprime}{\mathop{\sum\nolimits'}}
\newcommand{\nequiv}{\not\equiv}
\renewcommand{\geq}{\geqslant}
\renewcommand{\leq}{\leqslant}
\newcommand{\scaleeq}[1]{\scalebox{0.8}{$\displaystyle #1$}}
\title{Supercongruences for sporadic sequences}
\author{Robert Osburn, Brundaban Sahu and Armin Straub}
\address{School of Mathematical Sciences, University College Dublin, Belfield, Dublin 4, Ireland}
\email{robert.osburn@ucd.ie}
\address{School of Mathematical Sciences, National Institute of Science Education and Research, Bhubaneswar 751005, India}
\email{brundaban.sahu@niser.ac.in}
\address{Department of Mathematics,
University of Illinois at Urbana-Champaign,
Urbana, IL 61801,
United States}
\curraddr{Max-Planck-Institut f\"ur Mathematik,
53111 Bonn,
Germany}
\email{astraub@illinois.edu}
\subjclass[2010]{Primary: 11A07, 11B83}
\begin{document}

\dedicatory{Dedicated to Frits Beukers on the occasion of his 60th birthday}

\begin{abstract} 
We prove two-term supercongruences for generalizations of recently discovered sporadic sequences of Cooper. We also discuss recent progress and future directions concerning other types of supercongruences.
\end{abstract}

\date{\today}

\maketitle

\section{Introduction}

The term \textit{supercongruence} first appeared in Beukers' work
\cite{beukers-apery85} and was the subject of the Ph.D. thesis of Coster
\cite{coster}. It refers to the fact that congruences of a certain type are
stronger than those suggested by formal group theory. A motivating example in
\cite{beukers-apery85} and \cite{coster} is the Ap\'ery numbers
\begin{equation}
  A (n) = \sum_{k = 0}^n \binom{n}{k}^2 \binom{n + k}{k}^2 \label{eq:apery}
\end{equation}
which not only satisfy \cite{gessel}
\begin{equation}
  A (m p ) \equiv A (m) \pmod{p^3}, \label{ges}
\end{equation}
but the \textit{two-term supercongruence} \cite{coster}
\begin{equation}
  A (m p^r ) \equiv A (m p^{r - 1}) \pmod{p^{3 r}} \label{eq:apery-sc}
\end{equation}
for primes $p \geq 5$ and integers $m$, $r \geq 1$. In 1985, Beukers related
these numbers to the $p$-th Fourier coefficient $a (p)$ of $\eta^4 (2 z)
\eta^4 (4 z)$, where
\begin{equation*}
  \eta (z) = q^{1/24}  \prod_{n = 1}^{\infty} (1 - q^{n})
\end{equation*}
is the Dedekind eta function and $q=e^{2\pi iz}$, with $z$ in the upper half-plane. He proved that \cite{beukers}
\begin{equation}
  A \left( \frac{p - 1}{2} \right) \equiv a (p) \pmod{p}
  \label{modsuper}
\end{equation}
and then conjectured that \eqref{modsuper} holds modulo $p^2$. In \cite{ao},
Ahlgren and Ono proved this \textit{modular supercongruence} using Gaussian
hypergeometric series \cite{greene}. The techniques in \cite{ao} have been
the basis for several recent results of this type (see \cite{kilbourn},
\cite{lr}, \cite{dermot}, \cite{mort1}--\cite{mort4}, \cite{os}).
Other types of supercongruences are also of considerable interest. \textit{Ramanujan-type supercongruences} are $p$-adic versions of
formulas of Ramanujan which relate binomial sums to special values of the
gamma function (or $1 / \pi^a$, $a \geq 1$). For example, van Hamme
\cite{vh} conjectured that for Ramanujan's formula
\begin{equation*}
  \sum_{k = 0}^{\infty} (4 k + 1) \binom{- 1 / 2}{k}^5 = \frac{2}{\Gamma (3 /
  4)^4}, \label{ram}
\end{equation*}
we have the $p$-adic analogue
\begin{equation}
  \sum_{k = 0}^{(p - 1) / 2} (4 k + 1) \binom{- 1 / 2}{k}^5 \equiv \left\{
  \begin{array}{lll}
    - \frac{p}{\Gamma_p (3 / 4)^4} & \pmod{p^3}, & \text{if
    $p \equiv 1 \pmod{4}$,}\\
    0 & \pmod{p^3}, & \text{if $p \equiv 3 \pmod{4}$,}
  \end{array} \right. \label{padicram}
\end{equation}
where $\Gamma_p(\cdot)$ is the $p$-adic gamma function. For a proof of
\eqref{padicram}, see \cite{mo}. For recent progress in this direction, see
\cite{cdlns}, \cite{kilbourn}, \cite{g}, \cite{gz},  \cite{long}, \cite{mort} or
\cite{zudilin}. Finally, {\it Atkin--Swinnerton-Dyer}
supercongruences have been recently studied in \cite{cosvan}, \cite{klmsy}, \cite{lilong}
and \cite{scholl}.

In this paper, we consider the sequences of numbers given by
\begin{equation}
  s_7 (n) = \sum_{k = 0}^n \binom{n}{k}^2 \binom{n + k}{k} \binom{2 k}{n} 
  \label{eq:s7}
\end{equation}
as well as
\begin{equation} 
s_{18} (n) = \sum^{[n / 3]}_{k = 0} (- 1)^k \binom{n}{k} \binom{2 k}{k}
  \binom{2 (n - k)}{n - k} \left[ \binom{2 n - 3 k - 1}{n} + \binom{2 n - 3
  k}{n} \right],  \label{s18}
\end{equation}
with $s_{18} (0) = 1$.
These ``sporadic'' sequences were recently discovered by Cooper \cite{cooper}
while performing a numerical search for sequences which appear as coefficients
of series for $1 / \pi$ and of series expansions in $t$ of modular forms where
$t$ is a modular function. Here, the subscripts $7$ and $18$ are used in \eqref{eq:s7} 
and \eqref{s18} as the associated modular function is of level 7 and 18, respectively (see Theorem 3.1 in
\cite{cooper}). In \cite{cooper}, Cooper searched for parameters $(a, b,
c, d)$ such that the recurrence relation
\begin{equation}
  (n + 1)^3 s (n + 1) = (2 n + 1) (a n^2 + a n + b) s (n) - n (c n^2 + d) s (n
  - 1), \label{eq:csearch}
\end{equation}
with initial conditions $s (- 1) = 0$, $s (0) = 1$, produces only integer
values $s (n)$ for all $n \geq 0$. The tuple $(17, 5, 1, 0)$ corresponds
to the Ap\'ery numbers \eqref{eq:apery}, while the tuples $(13, 4, - 27, 3)$ and $(14, 6, - 192, 12)$ 
correspond to the sequences $s_7 (n)$ and $s_{18}(n)$, respectively. See \cite{avsz} for the case $d = 0$.
This search was motivated by Beukers' \cite{beukers3} and Zagier's \cite{zagier} work on sequences $t
(n)$ defined by
\begin{equation}
  (n + 1)^2 t (n + 1) = (an^2 + an + b) t (n) - cn^2 t (n - 1),
  \label{eq:zsearch}
\end{equation}
with initial conditions $t (- 1) = 0$, $t (0) = 1$, such that $t (n) \in
\mathbb{Z}$ for all $n \geq 0$. Zagier's search yielded six sequences
that are not either terminating, polynomial, hypergeometric or Legendrian.
These six sequences were called \textit{sporadic}.

Interestingly, Cooper conjectured the following
congruences (see Conjecture 5.1 in \cite{cooper}) which are reminiscent of
\eqref{ges}.

\begin{conjecture}
  \label{conj:s718-sc}For any prime $p \geq 3$,
  \begin{equation} 
    s_7 (m p ) \equiv s_7 (m) \pmod{p^3}  \label{eq:s7-sc}.
  \end{equation}
  Likewise, for any prime $p$,
  \begin{equation} \label{eq:s18-sc}
  s_{18}(mp) \equiv s_{18}(m) \pmod{p^2}.
  \end{equation}
\end{conjecture}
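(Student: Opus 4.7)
My plan is to adapt the method developed by Gessel \cite{gessel} and Coster \cite{coster} to establish the two-term supercongruence \eqref{eq:apery-sc} for the Apéry numbers. Since Cooper's congruences are precisely the $r = 1$ case of statements of the same shape as \eqref{eq:apery-sc}, I would in fact aim at the stronger two-term statements
\begin{equation*}
  s_7 (m p^r) \equiv s_7 (m p^{r - 1}) \pmod{p^{3 r}} \qquad (p \geq 5,\ r \geq 1)
\end{equation*}
and
\begin{equation*}
  s_{18} (m p^r) \equiv s_{18} (m p^{r - 1}) \pmod{p^{2 r}} \qquad (r \geq 1),
\end{equation*}
proving them by induction on $r$ and recovering Conjecture \ref{conj:s718-sc} at $r = 1$. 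The small primes $p = 3$ (for $s_7$) and $p = 2, 3$ (for $s_{18}$) would be handled separately if the uniform argument does not cover them. It is often convenient to introduce a one-parameter deformation of $s_7$ and $s_{18}$ obtained by perturbing the summand, so that one can differentiate in the parameter à la Krattenthaler--Rivoal; such a deformation may also clarify the origin of the extra $p$-power in the Apéry-type modulus.

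The core analytic input is the Wolstenholme--Ljunggren congruence $\binom{a p}{b p} \equiv \binom{a}{b} \pmod{p^3}$ for $p \geq 5$, together with its local refinements: the $p$-adic expansions of $\binom{m p}{j p + \ell}$, $\binom{(m + j) p + \ell}{j p + \ell}$ and related binomials in powers of $\ell \in \{0, 1, \ldots, p - 1\}$, and the $p$-adic harmonic-sum identities that arise in passing between them. These are exactly the ingredients that Coster assembles in \cite{coster}.

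For the base case $r = 1$ of $s_7$, write $F (n, k) = \binom{n}{k}^2 \binom{n + k}{k} \binom{2 k}{n}$ and decompose $k = j p + \ell$ with $0 \leq \ell < p$ in $s_7 (m p) = \sum_k F (m p, k)$. First, for $\ell = 0$, each binomial factor of $F (m p, j p)$ reduces to its $(m, j)$ analogue modulo $p^3$ by Wolstenholme--Ljunggren, so that $F (m p, j p) \equiv F (m, j) \pmod{p^3}$; summing over $j$ already reproduces $s_7 (m)$ modulo $p^3$. Second, for $\ell \neq 0$, the factor $\binom{m p}{j p + \ell}^2$ contributes at least $p^2$ to the valuation of $F (m p, j p + \ell)$, and I would pair terms $\ell \leftrightarrow p - \ell$ and expand in $\ell$ to recover the missing $p$ and show $\sum_{\ell = 1}^{p - 1} F (m p, j p + \ell) \equiv 0 \pmod{p^3}$. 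The inductive step from $r - 1$ to $r$ then follows the same local pattern in each $p$-block, exactly as in \cite{coster}. The argument for $s_{18}$ is parallel, and the weaker target $p^{2 r}$ reflects that the $s_{18}$ summand contains only two binomial factors vanishing modulo $p$ off the diagonal, rather than the three present in the $s_7$ summand.

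The main obstacle is the asymmetry introduced by the factor $\binom{2 k}{n}$ in $s_7$ and by the bracket $\binom{2 n - 3 k - 1}{n} + \binom{2 n - 3 k}{n}$ in $s_{18}$, which restrict the support of $k$ (to $\lceil n / 2 \rceil \leq k \leq n$ and $0 \leq k \leq \lfloor n / 3 \rfloor$, respectively) and have no analogue in the Apéry summand. Consequently, the telescoping over complete residue systems that drives the Coster argument must be carried out with care near the boundaries $k \approx n / 2$ and $k \approx n$ for $s_7$, and near $k \approx n / 3$ for $s_{18}$. I expect that after a symmetrising substitution such as $k \mapsto n - k$ for $s_7$, which turns $\binom{2 k}{n}$ into a factor that can be expanded like a central binomial, the boundary contributions can be shown to vanish at the required $p$-adic order, closing the argument.
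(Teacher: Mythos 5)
Your overall strategy coincides with the paper's: aim at the two-term congruences $s_7(mp^r)\equiv s_7(mp^{r-1}) \pmod{p^{3r}}$ and $s_{18}(mp^r)\equiv s_{18}(mp^{r-1}) \pmod{p^{2r}}$, split the summation index $k$ according to its $p$-divisibility, match the terms with $p\mid k$ against the terms of the sequence at $mp^{r-1}$ via Jacobsthal/Wolstenholme--Ljunggren, and for $p\nmid k$ extract $p^2$ from the squared binomial and one further power of $p$ from a harmonic-type sum. Your reading of $s_{18}$ is also correct and matches the paper: the summand is divisible by $\binom{n}{k}^2$, so each term with $p\nmid k$ vanishes individually modulo $p^{2r}$ and no cancellation across residues is needed.

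For $s_7$, however, there is a genuine gap at exactly the step you flag as the main obstacle. After writing $\binom{mp}{k}=\frac{mp}{k}\binom{mp-1}{k-1}$, what must be shown is that $\sum_{p\nmid k}\frac{1}{k^2}\binom{mp-1}{k-1}^2\binom{mp+k}{k}\binom{2k}{mp}\equiv 0\pmod{p}$. The factors other than $1/k^2$ reduce modulo $p$ to quantities depending only on $[k/p]$ and $[2k/p]$; the difficulty is that $[2k/p]$ equals $2[k/p]$ or $2[k/p]+1$ according as the remainder of $k$ modulo $p$ is less than or greater than $p/2$, so the coefficient of $1/k^2$ is \emph{not} constant on a residue class modulo $p$ but takes different values on its lower and upper halves. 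Your proposed pairing $\ell\leftrightarrow p-\ell$ mixes precisely these two halves, and in any case $1/\ell^2+1/(p-\ell)^2\equiv 2/\ell^2\pmod{p}$, so pairing cannot produce the required vanishing for an even power. What is actually needed is the half-range harmonic congruence $\sum_{k=1}^{(p^r-1)/2}k^{-2}\equiv 0\pmod{p^r}$ (restricted to $p\nmid k$) for $p\geq 5$, which the paper derives from the full power-sum congruence using that $-2\nequiv 0\pmod{p-1}$; the substitution $k\mapsto n-k$ does not remove this issue, and the boundary effects you worry about are immaterial since the binomial coefficients simply vanish outside the support and all reduction lemmas hold for arbitrary indices. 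Finally, note that the statement as given includes $p=3$ for $s_7$; that case is genuinely out of reach of this method (Jacobsthal loses a power of $3$) and is explicitly left open by the paper, so your plan to handle it ``separately'' cannot be discharged along these lines.
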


The purpose of this paper is to exhibit that \eqref{eq:s7-sc} and \eqref{eq:s18-sc} are special
cases of general two-term supercongruences. For integers $A$, $B$, $C$, let
\begin{equation}
 \mathcal{S} (n ; A, B, C) = \sum_{k = 0}^n \binom{n}{k}^A \binom{n + k}{k}^B
  \binom{2 k}{n}^C \label{eq:s7x}.
\end{equation}
Note that this family of sequences includes the Ap\'ery numbers as well as the sequence $s_7(n)$.

Our main results are the following supercongruences, the first of which, in particular, generalizes
the supercongruence \eqref{eq:apery-sc} for the Ap\'ery numbers.
\begin{theorem}
  \label{thm:sc-s7} Let $A \geq 2$ and $B,C \geq 0$ be integers. For any integers $m$, $r \geq 1$ and primes $p \geq 5$, we have
   \begin{equation}
    \mathcal{S} (m p^r ; A, B, C) \equiv \mathcal{S} (m p^{r - 1} ; A, B, C)
    \pmod{p^{3 r}}.  \label{eq:sc-ABC}
  \end{equation}
\end{theorem}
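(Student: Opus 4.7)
The plan is to adapt Coster's proof \cite{coster} of the Apéry supercongruence \eqref{eq:apery-sc} to the three-parameter family $\mathcal{S}(n;A,B,C)$. Write
\[
T(n,k) = \binom{n}{k}^A \binom{n+k}{k}^B \binom{2k}{n}^C
\]
for the summand. I would split the $k$-sum in $\mathcal{S}(mp^r;A,B,C)$ into a ``main'' part (the terms with $p^r \mid k$, that is $k = jp^r$ for $0 \leq j \leq m$) and an ``error'' part (all remaining terms, which have $v_p(k) < r$), and do the same for $\mathcal{S}(mp^{r-1};A,B,C)$ with threshold $p^{r-1}$. The target congruence \eqref{eq:sc-ABC} then splits into two assertions: (i) the main parts on the two sides agree modulo $p^{3r}$, and (ii) the error part on each side vanishes modulo $p^{3r}$.

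Assertion (i) is handled by the Jacobsthal--Kazandzidis congruence
\[
\binom{ap^r}{bp^r} \equiv \binom{ap^{r-1}}{bp^{r-1}} \pmod{p^{3r}} \qquad (p \geq 5),
\]
applied to each of the three binomial factors of $T(mp^r, jp^r)$ for $0 \leq j \leq m$. Since congruences modulo $p^{3r}$ are preserved under multiplication and under taking integer powers, this gives $T(mp^r, jp^r) \equiv T(mp^{r-1}, jp^{r-1}) \pmod{p^{3r}}$ term by term, and summing over $j$ matches the two main parts. The hypothesis $p \geq 5$ enters precisely here.

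The main obstacle is assertion (ii). Kummer's theorem yields $v_p(\binom{mp^r}{k}) \geq r-s$ when $v_p(k) = s < r$, whence
\[
v_p(T(mp^r,k)) \geq A(r-s),
\]
which suffices for $s$ close to $r$ but falls short for small $s$ (e.g.\ $s = 0$ already at $r=1$ for $A=B=2$). Additional cancellation must therefore come from grouping or pairing terms. Following Coster's strategy, I would seek an involution $k \leftrightarrow k^*$ on the error indices--of the form $k \mapsto k + u p^{s+1}$ for suitably chosen $u$, or $k \mapsto mp^r - k$, as appropriate--under which $T(mp^r,k) + T(mp^r,k^*)$ acquires the missing factors of $p$ to reach $p^{3r}$. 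The new factor $\binom{2k}{n}^C$, absent in the classical Apéry case, couples $k$ to $n$ asymmetrically and is the main source of additional difficulty: verifying that a single pairing interacts compatibly with all three factors for arbitrary admissible triples $(A,B,C)$ with $A \geq 2$ and $B, C \geq 0$ is where I expect the bulk of the technical work to lie.
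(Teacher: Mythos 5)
Your assertion (i) is fine and matches the first step of the paper's argument, but assertion (ii) is false as stated, and this is a genuine gap rather than a technical detail. The terms with $0 < v_p(k) < r$ on the left-hand side do \emph{not} vanish modulo $p^{3r}$; they must be matched against the corresponding terms on the right-hand side. Concretely, take the Ap\'ery case $(A,B,C)=(2,2,0)$, $p=5$, $m=1$, $r=2$. Your ``error part'' of $\mathcal{S}(5)$ is
\begin{equation*}
\sum_{5 \nmid k} \binom{5}{k}^2\binom{5+k}{k}^2 = 900 + 44100 + 313600 + 396900 = 755500 = 5^3\cdot 6044,
\end{equation*}
which is divisible by $5^3$ but not by $5^6$; so $E_R \nequiv 0 \pmod{p^{3r}}$. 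The correct organization (and the one the paper uses) is to stratify both sums by the exact $p$-adic valuation $s=v_p(k)$ and to prove, for each $s\geq 1$, that the block with $v_p(k)=s$ in $\mathcal{S}(mp^r)$ is congruent modulo $p^{3r}$ to the block with $v_p(k)=s-1$ in $\mathcal{S}(mp^{r-1})$, term by term via the general Jacobsthal congruence $\binom{p^ra}{p^sb}/\binom{p^{r-1}a}{p^{s-1}b}\equiv 1 \pmod{p^{r+s+\min(r,s)}}$ together with the divisibility of $\binom{mp^r}{kp^s}^A$ by $p^{A(r-s)}$ when $s\leq r$ (this is where $A\geq 2$ is used to reach $p^{r+2s+2(r-s)}=p^{3r}$). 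Only the single stratum $p\nmid k$ genuinely vanishes modulo $p^{3r}$.

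For that remaining stratum your sketch is also not carried out, and the pairing involution you propose is not how the cancellation arises. The paper instead writes $\binom{mp^r}{k}=\frac{mp^r}{k}\binom{mp^r-1}{k-1}$, so that for $A=2$ the term acquires an explicit factor $p^{2r}/k^2$, and the problem reduces to showing
\begin{equation*}
\sumprime_k \frac{1}{k^2}\binom{mp^r-1}{k-1}^2\binom{mp^r+k}{k}^B\binom{2k}{mp^r}^C \equiv 0 \pmod{p^r}.
\end{equation*}
This is done by reducing the binomial factors modulo $p^r$ to quantities depending only on $[k/p^s]$ (Lemmas of Beukers--Cost\-er type for $\binom{np^r-1}{k}$ and $\binom{j}{np^r}$), then summing $1/k^2$ over each block of $k$ with fixed $[k/p^s]$ and invoking $\sumprime_k k^{-2}\equiv 0 \pmod{p^s}$, which is proved by a scaling argument ($k\mapsto\lambda k$ with $\lambda^{-2}\nequiv 1 \bmod p$), not by an involution. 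Without an argument of this kind, the $A=2$ case --- which includes both the Ap\'ery numbers and $s_7$ --- remains unproven in your proposal; only $A\geq 3$ follows from valuation counting alone, and even then only for the $p\nmid k$ stratum.
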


\begin{theorem}
  \label{thm:sc-s18}
  For any integers $m$, $r \geq 1$ and any primes $p$, we have
  \begin{equation}
    s_{18} (m p^r) \equiv s_{18} (m p^{r - 1}) \pmod{p^{2
    r}} . \label{eq:sc-s18}
  \end{equation}
\end{theorem}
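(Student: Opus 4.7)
The plan is to adapt the method of Gessel \cite{gessel} and Coster \cite{coster} that underlies the Ap\'ery supercongruence \eqref{eq:apery-sc}, and presumably the proof of Theorem \ref{thm:sc-s7} as well. Denote the summand in \eqref{s18} by
$$T(n, k) = (-1)^k \binom{n}{k} \binom{2k}{k} \binom{2(n-k)}{n-k} \left[ \binom{2n - 3k - 1}{n} + \binom{2n - 3k}{n} \right],$$
so that $s_{18}(n) = \sum_{k} T(n, k)$, and decompose
$$s_{18}(mp^r) \; = \; \sum_{\ell \geq 0} T(mp^r, p\ell) \; + \; \sum_{\substack{k \geq 1 \\ p \nmid k}} T(mp^r, k).$$
I would treat the two pieces separately: show the ``diagonal'' sum matches $s_{18}(mp^{r-1})$ modulo $p^{2r}$ term by term, and that each off-diagonal term is individually divisible by $p^{2r}$.

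For the diagonal sum, the goal is the term-wise congruence $T(mp^r, p\ell) \equiv T(mp^{r-1}, \ell) \pmod{p^{2r}}$. This should follow by applying a Jacobsthal-type lemma of the form
$$\binom{N p^r}{M p^r} \equiv \binom{N p^{r-1}}{M p^{r-1}} \pmod{p^{2r}},$$
valid for every prime $p$ and all $0 \leq M \leq N$, $r \geq 1$, to each of the four binomial factors $\binom{mp^r}{p\ell}$, $\binom{2 p \ell}{p \ell}$, $\binom{2(mp^r - p \ell)}{mp^r - p \ell}$, and $\binom{2mp^r - 3 p \ell}{m p^r}$. The bracketed pair is reduced to the last of these via the identity
$$\binom{2n - 3k - 1}{n} + \binom{2n - 3k}{n} = \frac{3(n - 2k)}{2n - 3k} \binom{2n - 3k}{n},$$
whose rational prefactor is manifestly invariant under $(n, k) \mapsto (mp^{r-1}, \ell)$ when $(n, k) = (mp^r, p\ell)$. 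Multiplying the lifted factors produces the required congruence, after checking that the $p$-adic valuation of the rational prefactor does not absorb powers of $p$ from the binomials.

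For the off-diagonal sum, I would prove the stronger term-wise estimate $v_p\bigl(T(mp^r, k)\bigr) \geq 2r$ whenever $p \nmid k$. Since $v_p\bigl(\binom{mp^r}{k}\bigr) \geq r$ in this range (with equality when $k$ has small base-$p$ expansion), one factor of $p^r$ comes for free; the remaining $p^r$ must be supplied by $\binom{2(mp^r - k)}{mp^r - k}$ together with the bracketed sum, via a Kummer-theorem count of carries in the base-$p$ expansions of $k$ and $3k$. I expect the main obstacle to lie at the small primes $p = 2$ and especially $p = 3$: at $p = 3$ the coefficient $3$ in $\binom{2n - 3k}{n}$ is itself a power of $p$ and the naive Kummer count appears to lose one power of $p$ in the bracket. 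A finer stratification of the off-diagonal sum by the residue of $k$ modulo small powers of $p$, combined with the extra factor of $3$ appearing in the numerator of the rational prefactor above, should compensate for this loss. Once the off-diagonal vanishing is established, summing the lifted diagonal congruences over $\ell$ recovers $s_{18}(mp^{r-1})$ modulo $p^{2r}$, completing the proof.
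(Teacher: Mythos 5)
Your overall architecture --- stratify the sum by the $p$-adic valuation of $k$, lift the terms with $p \mid k$ via a Jacobsthal-type congruence, and kill the terms with $p \nmid k$ by divisibility --- is exactly the paper's. The decisive ingredient you are missing is the identity
\begin{equation*}
  \binom{2k}{k}\binom{2(n-k)}{n-k} \;=\; \binom{n}{k}\, S(n-k,k),
  \qquad S(a,b)=\frac{(2a)!\,(2b)!}{a!\,b!\,(a+b)!},
\end{equation*}
where the super Catalan numbers $S(a,b)$ are integers. This shows that the summand is divisible by $\binom{n}{k}^{2}$, so for $p \nmid k$ the term $T(mp^{r},k)$ is divisible by $p^{2r}$ immediately, since $v_{p}\bigl(\binom{mp^{r}}{k}\bigr)\geq r$. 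Your proposed substitute --- extracting the second $p^{r}$ from $\binom{2(mp^{r}-k)}{mp^{r}-k}$ together with the bracket via Kummer carry counts --- fails. Concretely, take $p=5$, $r=2$, $m=1$, $k=3$ (a term that genuinely occurs, as $3\leq [25/3]$): one checks $v_{5}\bigl(\binom{44}{22}\bigr)=1$ (a single carry in $22+22$ base $5$), and the bracket $\binom{40}{25}+\binom{41}{25}=\frac{57}{16}\binom{40}{25}$ has $v_{5}=0$, so these factors supply only $5^{1}$, not the required $5^{2}$. The term is nonetheless divisible by $5^{4}$, but the missing power comes from $\binom{2k}{k}=\binom{6}{3}=20$ --- that is, precisely from the super Catalan factorization. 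Recovering this divisibility by raw carry counting amounts to reproving the integrality of $S(n-k,k)$ and is not a routine step.

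Two further issues. First, rewriting the bracket as $\frac{3(n-2k)}{2n-3k}\binom{2n-3k}{n}$ introduces a rational prefactor whose denominator $2n-3k$ may carry a high power of $p$, which can eat the multiplicative Jacobsthal congruence on $\binom{2n-3k}{n}$; the paper instead applies Jacobsthal to $\binom{2n-3k-1}{n}$ directly after writing it as $(-1)^{n}\binom{3k-n}{n}$. Second, your Jacobsthal lemma is stated only for equal powers $p^{r}$ upstairs and downstairs and is claimed modulo $p^{2r}$ for every prime. What is actually needed is the asymmetric version $\binom{p^{r}a}{p^{s}b}/\binom{p^{r-1}a}{p^{s-1}b}\equiv 1 \pmod{p^{r+s+\min(r,s)}}$ for $p\geq 5$, combined on the deeper strata ($s<r$) with the divisibility of the term by powers of $\binom{mp^{r}}{kp^{s}}$ to reach $p^{2r}$; and for $p=2$ the ratio is only $\pm 1$ modulo $2^{r+s+\min(r,s)-2}$, so the small primes require the separate sign-and-valuation analysis that occupies the last part of the paper's proof rather than a ``finer stratification'' of the off-diagonal sum.
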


Note that by taking $(A, B, C) = (2, 1, 1)$ and $r=1$ in
Theorem~\ref{thm:sc-s7}, we prove \eqref{eq:s7-sc} of
Conjecture~\ref{conj:s718-sc} for primes $p \geq 5$.  Moreover,
Theorem~\ref{thm:sc-s7} shows a general supercongruence for the sporadic
sequence 
\begin{equation*}
  \sum_{k = 0}^n \binom{n}{k}^2 \binom{2 k}{n}^2,
\end{equation*}
which is case $(\epsilon)$ in \cite{avsz} (see also Table \ref{tbl:sporadic3}
in Section~\ref{sec:concl}).
On the other hand, we remark that Theorem~\ref{thm:sc-s18} is considerably simpler than
Theorem~\ref{thm:sc-s7} because it suffices to consider each summand of the sum
\eqref{s18}, defining $s_{18}(n)$, individually. In both cases, our proof of
the congruences of Conjecture~\ref{conj:s718-sc} relies on the presence of the
binomial sums \eqref{eq:s7} and \eqref{s18} which were discovered by Zudilin.

Finally, we should mention that Cooper \cite{cooper} conjectures congruences
similar to \eqref{eq:s7-sc} for $p=2$, as well as a
stronger version of \eqref{eq:s18-sc} for $p=2,3$. These conjectures as well as 
\eqref{eq:s7-sc} for $p=3$ remain open. Based on numerical evidence, we actually conjecture that
\begin{equation}
  s_7 (m 2^r) \equiv s_7 (m 2^{r-1}) \pmod{2^{3r+2}}
\end{equation}
for $m\geq4$ and
\begin{equation}
  s_7 (m 3^{r}) \equiv s_7 (m 3^{r-1}) \pmod{3^{3r}}
\end{equation}
for any positive integer $m$, as well as
\begin{equation}
  s_{18}(m 2^r) \equiv s_{18}(m 2^{r-1}) \pmod{2^{2r+3}}
\end{equation}
for $m \geq 2$ and
\begin{equation}\label{eq:s18:mod3}
  s_{18}(m 3^r) \equiv s_{18}(m 3^{r-1}) \pmod{3^{3r-1}}
\end{equation}
for $m \geq 3$ (for $r=1$ the congruence \eqref{eq:s18:mod3} empirically holds
modulo $3^3$).
Slightly weaker congruences appear to hold in the cases when $m$ is not large
enough.
We expect that these conjectures, which naturally generalize the ones from
\cite{cooper}, can be established using the techniques we use in the case
$p\geq5$ when coupled with a careful and likely very technical analysis of the
kind presented at the end of our proof of Theorem~\ref{thm:sc-s18}.

The remainder of the paper is organized as follows. Section~\ref{sec:proof} is
devoted to the proofs of Theorems~\ref{thm:sc-s7} and \ref{thm:sc-s18}.
We then indicate in Section~\ref{sec:x} that these proofs readily
generalize to other sequences of interest.
In Section~\ref{sec:concl}, we conclude with remarks concerning future
directions.  In particular, we discuss both proven and conjectural two-term
supercongruences for all known sporadic sequences.

\section{Proof of Theorems \ref{thm:sc-s7} and \ref{thm:sc-s18}}\label{sec:proof}

Throughout this section, following \cite{beukers-apery85}, we let $\sumprime$ denote the sum over indices
not divisible by $p$.

We first recall the following version of Jacobsthal's binomial congruence
\cite{bsftalj}. For a proof, when $a,b\geq0$, we refer to \cite{gessel-j},
\cite{granville-bin97}, while the extension to negative integers is discussed
in \cite{s13}. Similar congruences hold \cite{gessel-j,s13} in the cases $p=2$
and $p=3$.

\begin{lemma}
  \label{lem:jacobsthal}For primes $p \geq 5$, integers $a,b$ and integers $r, s
  \geq 1$,
  \begin{equation}
    \binom{p^r a}{p^s b} / \binom{p^{r - 1} a}{p^{s - 1} b} \equiv 1
    \pmod{p^{r + s + \min (r, s)}} . \label{eq:jacobsthal}
  \end{equation}
\end{lemma}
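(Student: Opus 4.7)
The plan is to deduce the lemma from the sharpened Jacobsthal congruence of Kazandzidis,
\begin{equation*}
\binom{np}{kp} \equiv \binom{n}{k} \pmod{p^3 \, nk(n-k)\binom{n}{k}} \qquad (p \geq 5),
\end{equation*}
applied with $n = p^{r-1}a$ and $k = p^{s-1}b$. A first reduction absorbs any power of $p$ dividing $a$ or $b$ into $r$ or $s$, so we may assume $p \nmid ab$. Setting $A = p^{r-1}a$ and $B = p^{s-1}b$ we then have $v_p(A) = r-1$, $v_p(B) = s-1$. For $r < s$, $A-B = p^{r-1}(a - p^{s-r}b)$ has $v_p$ equal to $r-1$; for $r = s$, $v_p(A-B) \geq r-1$; the case $r > s$ is symmetric. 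Dividing the Kazandzidis congruence by $\binom{A}{B}$ and extracting $p$-adic valuations yields
\begin{equation*}
v_p\!\left(\frac{\binom{pA}{pB}}{\binom{A}{B}} - 1\right) \geq 3 + v_p(A) + v_p(B) + v_p(A-B) \geq r + s + \min(r,s),
\end{equation*}
which is the desired conclusion. The extension to negative $a$ or $b$ follows from standard reflection identities, as discussed in~\cite{s13}.

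To establish the Kazandzidis congruence itself, I would begin from the product decomposition
\begin{equation*}
\binom{pA}{pB}\Big/\binom{A}{B} = \prod_{\substack{k=1 \\ p \nmid k}}^{pB}\left(1 + \frac{p(A-B)}{k}\right),
\end{equation*}
expand the right-hand side as $1 + \sum_{m \geq 1}(p(A-B))^m e_m$ with $e_m$ the $m$-th elementary symmetric function of $\{1/k : 1 \leq k \leq pB,\; p \nmid k\}$, and invoke Wolstenholme-type congruences (namely $\sum_{k=1}^{p-1} k^{-j} \equiv 0 \pmod{p^2}$ for odd $j$ and $\pmod p$ for even $j$) to bound each $v_p(e_m)$. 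These naive bounds deliver the classical $p^3$ of Jacobsthal's original congruence.

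The main obstacle is squeezing the \emph{extra} divisibility by $AB(A-B)\binom{A}{B}$ out of the same expansion. For this I would organise the indices $1 \leq k \leq pB$ with $p \nmid k$ into $B$ blocks of length $p-1$, Taylor-expand $(pn+j)^{-1} = \sum_{i \geq 0}(-pn)^i j^{-i-1}$ within each block, and exploit cancellation across the $B$ blocks to show that the power sums acquire an additional factor of $B$; a mirror bookkeeping on the contributions of the $p$-divisible indices in $(pA)!$ supplies the factor $A$, while interactions of the $A$- and $B$-type corrections produce the factor $A-B$. Organising this combinatorial accounting cleanly is the delicate heart of the argument, and is worked out in detail in Granville~\cite{granville-bin97} and Gessel~\cite{gessel-j}.
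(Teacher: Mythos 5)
The paper does not actually prove this lemma: it is quoted from the literature, with \cite{gessel-j} and \cite{granville-bin97} cited for the case $a,b\geq0$ and \cite{s13} for negative integers. So there is no in-paper proof to match; what you have written is a genuine (and correct) reduction that the paper leaves implicit. Your deduction from the Kazandzidis congruence is sound: after absorbing $p$-parts of $a$ and $b$ into $r$ and $s$ (which only increases the target exponent $r+s+\min(r,s)$, so the reduction is legitimate), the bookkeeping $3+v_p(A)+v_p(B)+v_p(A-B)\geq 3+(r-1)+(s-1)+(\min(r,s)-1)=r+s+\min(r,s)$ is exactly right, and dividing the Kazandzidis statement by $\binom{A}{B}$ correctly cancels the $v_p\bigl(\binom{A}{B}\bigr)$ term. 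This is in fact the standard route (Granville derives the lemma's statement precisely from the sharpened form $\binom{np}{kp}\equiv\binom{n}{k}$ modulo the $p$-part of $p^3nk(n-k)\binom{n}{k}$). Your closing paragraph on proving Kazandzidis itself is only a heuristic sketch --- the ``extra divisibility by $AB(A-B)\binom{A}{B}$'' is the entire difficulty, and you rightly defer it to \cite{granville-bin97} and \cite{gessel-j} --- but since those are the same sources the paper itself relies on for the whole lemma, your write-up is, if anything, more explicit than the paper's treatment. Two minor points worth flagging: the congruence must be read $p$-adically (the modulus $p^3nk(n-k)\binom{n}{k}$ means its $p$-part), which your phrase ``extracting $p$-adic valuations'' implicitly acknowledges; and the degenerate cases where the binomial coefficients vanish (e.g.\ $0\leq a<b$) need the ratio to be interpreted appropriately, an issue already present in the paper's own formulation.
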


We will also make use, in the case $n = - 2$, of the following simple
congruences.

\begin{lemma}
  \label{lem:powersummod}Let $p$ be a prime and $n$ an integer such that $n
  \nequiv 0$ modulo $p - 1$. Then, for all integers $r \geq 0$,
  \begin{equation}
    \sumprime_{k = 1}^{p^r-1} k^n \equiv 0 \pmod{p^r} . \label{eq:powersummod}
  \end{equation}
  If, additionally, $n$ is even, then, for primes $p \geq 5$,
  \begin{equation}
     \sumprime_{k = 1}^{(p^r - 1) / 2} \frac{1}{k^n} \equiv 0
    \pmod{p^r} . \label{eq:powersummod2}
  \end{equation}
\end{lemma}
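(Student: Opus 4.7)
The plan is to treat the two statements in sequence, both via the standard averaging trick on the group of units $(\mathbb{Z}/p^r\mathbb{Z})^\ast$.

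For \eqref{eq:powersummod}, I would choose a primitive root $a$ modulo $p$, which exists for $p$ odd (the case $p=2$ is vacuous since the hypothesis $n\not\equiv 0\pmod{p-1}$ is then empty). The hypothesis forces $a^n\not\equiv 1\pmod p$, so $a^n-1$ is coprime to $p$ and hence a unit modulo $p^r$. Since $k\mapsto ak\bmod p^r$ permutes the residues coprime to $p$ in $\{1,\dots,p^r-1\}$, reindexing yields
$$\sumprime_{k=1}^{p^r-1}k^n\equiv\sumprime_{k=1}^{p^r-1}(ak)^n\equiv a^n\sumprime_{k=1}^{p^r-1}k^n\pmod{p^r},$$
and multiplying by $(a^n-1)^{-1}$ gives the result. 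The same argument applies with $n$ replaced by $-n$, interpreting $k^{-n}$ as the modular inverse of $k^n$ in $\mathbb{Z}/p^r$, since $k\mapsto k^{-1}$ is a bijection on the units.

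For \eqref{eq:powersummod2}, I would pair $k$ with $p^r-k$. Because $n$ is even, $(p^r-k)^n\equiv(-k)^n=k^n\pmod{p^r}$, so likewise $(p^r-k)^{-n}\equiv k^{-n}\pmod{p^r}$, giving
$$\sumprime_{k=1}^{p^r-1}\frac{1}{k^n}\equiv 2\sumprime_{k=1}^{(p^r-1)/2}\frac{1}{k^n}\pmod{p^r}.$$
The left-hand side vanishes modulo $p^r$ by the first part, and since $p\geq 5$ the factor of $2$ is invertible and can be cancelled. The restriction $p\geq 5$ is in fact essential: for $p=3$, the two conditions ``$n$ even'' and ``$n\not\equiv 0\pmod{p-1}$'' cannot simultaneously hold, so the first part is unavailable.

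There is no real obstacle here; both statements amount to the classical fact that $n$-th power sums over $(\mathbb{Z}/p^r\mathbb{Z})^\ast$ vanish modulo $p^r$ as soon as raising to the $n$-th power is a nontrivial endomorphism of $(\mathbb{Z}/p\mathbb{Z})^\ast$. The only care required is bookkeeping: verifying that the quantities one wishes to cancel (namely $a^n-1$ and $2$) are units modulo $p^r$ under the stated hypotheses on $n$ and $p$.
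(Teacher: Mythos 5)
Your proposal is correct and follows essentially the same route as the paper: the paper also multiplies the sum by $\lambda^n$ for a unit $\lambda$ with $\lambda^n\nequiv 1\pmod p$ and reindexes to deduce \eqref{eq:powersummod}, and then obtains \eqref{eq:powersummod2} by observing that, for $n$ even, the half-range sum is half of the full sum of $k^{-n}$ over units, which vanishes by the first part. Your write-up merely makes explicit a few details (the substitution $n\mapsto -n$, the invertibility of $2$, and why $p\geq5$ is needed) that the paper leaves implicit.
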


\begin{proof}
  Since $n \nequiv 0$ modulo $p - 1$, we find an integer $\lambda$, not
  divisible by $p$, such that $\lambda^n \nequiv 1$ modulo $p$. Then,
  \begin{equation*}
    \lambda^n  \sumprime_{k = 1}^{p^r - 1} k^n =  \sumprime_{k = 1}^{p^r - 1} (\lambda k)^n \equiv  \sumprime_{k = 1}^{p^r - 1} k^n
     \pmod{p^r},
  \end{equation*}
  since the second and third sum run over the same residues modulo $p^r$. As
  $\lambda^n$ is not divisible by $p$, the congruence \eqref{eq:powersummod}
  follows.
  
  Congruence \eqref{eq:powersummod2} follows since the sum in
  \eqref{eq:powersummod2}, modulo $p^r$, is exactly half of the sum in
  \eqref{eq:powersummod} if $n$ is even.
\end{proof}

\begin{lemma}
  \label{lem:Bind}For integers $n$, $k \geq 1$ and $A, B, C \geq 0$, define
  \begin{equation*}
    \mathcal{B} (n, k) = \mathcal{B}(n,k; A,B,C)= \binom{n}{k}^A \binom{n + k}{k}^B \binom{2 k}{n}^C .
    \label{eq:Bnk}
  \end{equation*}
  Then, for primes $p \geq 5$ and integers $A \geq 2$, $r, s
  \geq 1$ and $k \geq 0$ such that $p \nmid k$,
  \begin{equation}
    \mathcal{B} (n p^r, k p^s ) \equiv \mathcal{B} (n p^{r - 1}, k p^{s - 1})
    \pmod{p^{3 r}} . \label{eq:Bind}
  \end{equation}
\end{lemma}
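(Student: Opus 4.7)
The plan is to apply Lemma~\ref{lem:jacobsthal} to each of the three binomial factors of $\mathcal{B}(np^r, kp^s) = B_1^A B_2^B B_3^C$, where
\[
B_1 = \binom{np^r}{kp^s}, \quad B_2 = \binom{np^r + kp^s}{kp^s}, \quad B_3 = \binom{2kp^s}{np^r},
\]
and to compare with the primed analogues $B_1', B_2', B_3'$ obtained by $(r,s) \mapsto (r-1, s-1)$. Lemma~\ref{lem:jacobsthal} applied with maximal admissible parameters yields $B_1/B_1', B_3/B_3' \equiv 1 \pmod{p^{r+s+\min(r,s)}}$, while for the middle factor the hypothesis $p \nmid k$ forces $v_p(np^r + kp^s) = \min(r,s)$, so after factoring out $p^{\min(r,s)}$ the lemma gives only $B_2/B_2' \equiv 1 \pmod{p^{2\min(r,s)+s}}$.

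When $s \geq r$ all three moduli exceed $p^{3r}$ and the product of the three congruences immediately delivers $(B_1/B_1')^A (B_2/B_2')^B (B_3/B_3')^C \equiv 1 \pmod{p^{3r}}$. The substantive case is $s < r$, where the $B_2$-modulus degenerates to $p^{3s}$. To compensate, I would use Kummer's theorem to establish $v_p(B_1') \geq r - s$: in the base-$p$ addition $kp^{s-1} + (np^{r-1} - kp^{s-1}) = np^{r-1}$, the digits $(np^{r-1})_i$ vanish for $s-1 \leq i \leq r-2$ while $(kp^{s-1})_{s-1} = k_0 \neq 0$, forcing a carry at each of these $r - s$ positions. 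Combined with $A \geq 2$, this gives $v_p(B_1^A - B_1'^A) \geq A(r-s) + (r+2s) \geq 3r$, and analogously for $B_3^C$.

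The difference $\mathcal{B}(np^r, kp^s) - \mathcal{B}(np^{r-1}, kp^{s-1})$ is then handled via the telescoping decomposition
\[
(B_1^A - B_1'^A) B_2^B B_3^C + B_1'^A(B_2^B - B_2'^B) B_3^C + B_1'^A B_2'^B(B_3^C - B_3'^C).
\]
The first and third summands are divisible by $p^{3r}$ by the above estimates. The main obstacle is the middle summand: a direct bound yields only $v_p \geq A(r-s) + 3s$, which falls one power short of $3r$ precisely in the critical case $A = 2, s = r - 1$. To close this gap, I would sharpen the $B_2$-congruence to $B_2/B_2' \equiv 1 \pmod{p^{r+2s}}$ via the product decomposition
\[
\frac{B_2}{B_2'} = \prod_{\substack{1 \leq j \leq kp^s \\ p \,\nmid\, j}} \left(1 + \frac{np^r}{j}\right),
\]
obtained by grouping the factors of $B_2 = \prod_{j=1}^{kp^s}(np^r+j)/j$ according to divisibility by $p$. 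Taking logarithms and invoking a Wolstenholme-type strengthening of Lemma~\ref{lem:powersummod} for the harmonic sum $\sum' j^{-1}$ yields the required bound. This Wolstenholme-type sharpening of the power-sum congruence is the principal technical obstacle.
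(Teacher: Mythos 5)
Your overall architecture is the paper's: apply Lemma~\ref{lem:jacobsthal} to each factor, dispose of the case $s\geq r$ immediately, and in the case $s<r$ combine a ratio congruence modulo $p^{r+2s}$ with the divisibility $p^{r-s}\mid\binom{np^r}{kp^s}$ (which $A\geq2$ upgrades to $p^{2(r-s)}$), so that $2(r-s)+(r+2s)=3r$. The accounting in your first and third telescoping summands is correct. The problem is entirely in how you treat the middle factor, and here you have manufactured an obstacle that is not there. Writing $\binom{np^r+kp^s}{kp^s}=\binom{np^r+kp^s}{np^r}$ and applying Lemma~\ref{lem:jacobsthal} with the bottom entry $np^r$ (so the two exponents fed into the lemma are $\min(r,s)$ for the top and $r$ for the bottom) gives
\[
  \binom{np^r+kp^s}{np^r}\Big/\binom{np^{r-1}+kp^{s-1}}{np^{r-1}}\equiv 1 \pmod{p^{\,r+2\min(r,s)}},
\]
which is precisely the sharpened congruence $B_2/B_2'\equiv1\pmod{p^{r+2s}}$ you identify as the missing ingredient. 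This is the paper's one-line move, and with it the proof closes with no further input.

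As written, your proposal has a genuine gap at exactly this point: you defer the sharpening to ``a Wolstenholme-type strengthening of Lemma~\ref{lem:powersummod}'' for $\sumprime_{j\leq kp^s}1/j$ and declare it the principal technical obstacle, without proving it. That congruence ($\equiv0\bmod p^{2s}$ for $p\geq5$) is true, but establishing it --- together with the control of the higher elementary symmetric functions in your expansion of $\prod'(1+np^r/j)$, which do not all trivially clear $p^{r+2s}$ when $s<r<2s$ --- amounts to re-proving the relevant case of Jacobsthal's congruence from scratch. A minor further inaccuracy: with only the weak bound $B_2/B_2'\equiv1\pmod{p^{3s}}$, the middle summand has valuation at least $2(r-s)+3s=2r+s$, so it falls short of $3r$ by $r-s$ for \emph{every} $s<r$, not by one power only in the case $s=r-1$; this does not affect your intended fix, but the diagnosis should be corrected.
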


\begin{proof}
  By Jacobsthal's congruence \eqref{eq:jacobsthal}, we have
  \begin{equation*}
    \binom{n p^r}{k p^s} / \binom{n p^{r - 1}}{k p^{s - 1}} \equiv 1
     \pmod{p^{r + s + \min (r, s)}}
  \end{equation*}
  as well as
  \begin{equation*}
    \binom{n p^r  + k p^s}{n p^r} / \binom{n p^{r - 1} + k p^{s - 1}}{n p^{r -
     1}} \equiv 1 \pmod{p^{r + 2 \min (r, s)}}
  \end{equation*}
  and
  \begin{equation*}
    \binom{2 k p^s}{n p^r} / \binom{2k p^{s - 1}}{n p^{r - 1}} \equiv 1
     \pmod{p^{r + s + \min (r, s)}} .
  \end{equation*}
  Thus, if $s \geq r$ then congruence \eqref{eq:Bind} follows immediately
  upon applying Jacobsthal's congruence to each binomial coefficient. On the
  other hand, suppose $s \leq r$. Then the same approach yields
  \begin{equation}
    \mathcal{B} (n p^r, k p^s) = \lambda \mathcal{B} (n p^{r - 1}, k p^{s - 1}) \label{eq:Bind0}
  \end{equation}
  with $\lambda \equiv 1$ modulo $p^{r + 2 s}$. Moreover, since $p \nmid k$,
  we have
  \begin{equation*}
    \binom{n p^r}{k p^s} \equiv \binom{n p^{r - 1}}{k p^{s - 1}} \equiv 0
     \pmod{p^{r - s}} .
  \end{equation*}
  As $A \geq 2$, it follows that $p^{2 (r - s)}$ divides
  $\mathcal{B} (n p^r, k p^s)$. Since $r + 2 s + 2 (r - s) = 3 r$, congruence \eqref{eq:Bind} follows from \eqref{eq:Bind0}.
\end{proof}

In the following, $[x]$ denotes the largest integer $m$ such that $m \leq
x$.

\begin{lemma}
  \label{lem:binomind}For primes $p$, integers $m$ and integers $k \geq
  0$, $r \geq 1$,
  \begin{equation}
    \binom{m p^r  - 1}{k} (- 1)^k \equiv \binom{m p^{r - 1} - 1}{[k / p]} (-
    1)^{[k / p]} \pmod{p^r} . \label{eq:binomind}
  \end{equation}
\end{lemma}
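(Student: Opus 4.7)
The plan is to rewrite the left-hand side of \eqref{eq:binomind} as a rational product of the form
\begin{equation*}
(-1)^k \binom{mp^r - 1}{k} \;=\; \prod_{j=1}^{k} \frac{j - mp^r}{j},
\end{equation*}
and then to separate the factors according to divisibility by $p$. Setting $q = [k/p]$, so that $k = pq + t$ with $0 \leq t < p$, the product splits into $k - q$ factors indexed by $j$ coprime to $p$ and $q$ factors indexed by the multiples $j = pi$ with $1 \leq i \leq q$.

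I would then argue that each factor with $p \nmid j$ contributes a unit congruent to $1$ modulo $p^r$: the numerator satisfies $j - mp^r \equiv j \pmod{p^r}$, and since $j$ is invertible modulo $p^r$, the quotient is $1$ modulo $p^r$. Multiplying the $k - q$ such factors still gives $1$ modulo $p^r$. For the remaining factors, the elementary simplification $(pi - mp^r)/(pi) = (i - mp^{r-1})/i$ shows that their product equals
\begin{equation*}
\prod_{i=1}^{q} \frac{i - mp^{r-1}}{i} \;=\; (-1)^q \binom{mp^{r-1} - 1}{q}.
\end{equation*}
Combining the two evaluations yields the desired congruence immediately.

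I do not anticipate a serious obstacle. The only point requiring care is that the ``mod $p^r$'' statement for the coprime-to-$p$ factors is a priori a congruence in the localization $\mathbb{Z}_{(p)}$, but since both sides of \eqref{eq:binomind} are integers, this descends to the claimed integer congruence modulo $p^r$. A convenient feature of this approach is that the sign factors $(-1)^k$ and $(-1)^{[k/p]}$ emerge automatically from the factorizations, so no separate analysis is needed for the residue $t$ of $k$ modulo $p$ or for the case $p = 2$.
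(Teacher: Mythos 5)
Your argument is correct and is essentially identical to the paper's proof (which follows Lemma 2 of Beukers): both split the product $\prod_{j=1}^{k}\frac{mp^r-j}{j}$ according to whether $p$ divides $j$, identify the product over multiples of $p$ with $\pm\binom{mp^{r-1}-1}{[k/p]}$, and reduce the remaining unit factors modulo $p^r$. The only cosmetic difference is that you absorb the sign into the numerators from the outset, so the coprime-to-$p$ factors reduce to $1$ rather than $-1$.
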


\begin{proof}
  Following \cite[Lemma 2]{beukers-apery85}, we split the defining product
  of the binomial coefficient, according to whether the index is divisible by
  $p$ or not, to obtain
  \begin{eqnarray*}
    \binom{m p^r  - 1}{k} & = & \prod_{j = 1}^k \frac{m p^r  - j}{j}\\
    & = & \prod_{\substack{j = 1 \\ p \nmid j}}^k \frac{m p^r - j}{j} \prod_{\lambda =
    1}^{[k / p]} \frac{m p^{r - 1}  - \lambda}{\lambda}\\
    & = & \binom{m p^{r - 1}  - 1}{[k / p]} \prod_{\substack{j = 1 \\ p \nmid j}}^k
    \frac{m p^r  - j}{j} .
  \end{eqnarray*}
  The claim follows upon reducing modulo $p^r$.
\end{proof}

\begin{lemma}
  \label{lem:Cind}For integers $n, k, j$ and $A, B, C \geq 0$, define
  \begin{equation}
    \mathcal{C} (n, k, j) =  \mathcal{C} (n, k, j ; A, B, C)= \binom{n -
    1}{k}^A \binom{n + k}{k}^B \binom{j}{n}^C . \label{eq:Cnkj}
  \end{equation}
  Then, for primes $p$ and integers $n, k, j \geq 0$, $r \geq 1$,
  \begin{equation*}
    \mathcal{C} (n p^r, k, j) \equiv (- 1)^{(k + [k / p]) A} \mathcal{C} (np^{r
    - 1}, [k / p], [j / p]) \pmod{p^r} . \label{eq:Cind}
  \end{equation*}
\end{lemma}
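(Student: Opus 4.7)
The plan is to prove the congruence factor-by-factor in
$$\mathcal{C}(np^r, k, j) = \binom{np^r-1}{k}^A \binom{np^r+k}{k}^B \binom{j}{np^r}^C,$$
establishing a reduction modulo $p^r$ for each of the three binomial factors and then multiplying; all sign contributions come from the first factor. That first factor is handled directly by Lemma~\ref{lem:binomind} with $m=n$, which gives
$$\binom{np^r-1}{k} \equiv (-1)^{k+[k/p]}\binom{np^{r-1}-1}{[k/p]} \pmod{p^r},$$
and hence the sign $(-1)^{A(k+[k/p])}$ once the $A$-th power is taken.

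For the second factor, I would imitate the splitting technique used in the proof of Lemma~\ref{lem:binomind}: write $\binom{np^r+k}{k} = \prod_{i=1}^{k} \frac{np^r+i}{i}$ and split the index set according to whether $p \mid i$. The indices $i=p\lambda$ with $1 \leq \lambda \leq [k/p]$ contribute $\prod_{\lambda}\frac{np^{r-1}+\lambda}{\lambda} = \binom{np^{r-1}+[k/p]}{[k/p]}$, while for $p \nmid i$ the numerator $np^r+i$ reduces to $i$ modulo $p^r$ and the corresponding denominator is invertible modulo $p^r$, so the complementary product is $\equiv 1 \pmod{p^r}$. This yields $\binom{np^r+k}{k} \equiv \binom{np^{r-1}+[k/p]}{[k/p]} \pmod{p^r}$.

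The third factor is handled by exploiting the symmetry $\binom{j}{np^r} = \binom{np^r+k'}{k'}$ with $k'=j-np^r$; the argument just given, with $k$ replaced by $k'$, then supplies $\binom{j}{np^r} \equiv \binom{np^{r-1}+[k'/p]}{[k'/p]} \pmod{p^r}$. Writing $j=pq+s$ with $0 \leq s < p$ one has $[k'/p] = q-np^{r-1}$, so the right-hand side equals $\binom{q}{np^{r-1}} = \binom{[j/p]}{np^{r-1}}$, as required. The boundary case $j<np^r$ is immediate since then $[j/p]<np^{r-1}$ and both binomials vanish. Multiplying the three congruences gives the lemma. The calculation is mostly routine once Lemma~\ref{lem:binomind} is available; the main observation is the symmetry flip that lets the third factor be attacked by exactly the same splitting argument as the second, together with the floor identity $[(j-np^r)/p] = [j/p]-np^{r-1}$.
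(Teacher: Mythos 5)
Your proof is correct and follows essentially the same route as the paper: a factor-by-factor reduction modulo $p^r$, with the first factor handled by Lemma~\ref{lem:binomind} (whence the sign) and the third by the substitution $k'=j-np^r$ together with the floor identity $[(j-np^r)/p]=[j/p]-np^{r-1}$. The only cosmetic difference is that for the middle factor the paper invokes $\binom{n+k}{k}=(-1)^k\binom{-n-1}{k}$ and reuses Lemma~\ref{lem:binomind} with negative first argument, whereas you re-run the product-splitting argument directly on $\prod_{i=1}^{k}\frac{np^r+i}{i}$; the underlying computation is identical.
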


\begin{proof}
  Note that
  \begin{equation} \label{eq:binomial}
    \binom{n + k}{k} = (- 1)^k \binom{- n - 1}{k} .
  \end{equation}
  Using Lemma~\ref{lem:binomind}, we find that
  \begin{equation*}
    (- 1)^k \binom{- n p^r  - 1}{k} \equiv (- 1)^{[k / p]} \binom{- n p^{r - 1}
      - 1}{[k / p]} \pmod{p^r}
  \end{equation*}
  or, equivalently,
  \begin{equation}
    \binom{n p^r  + k}{k} \equiv \binom{n p^{r - 1}  + [k / p]}{[k / p]}
    \pmod{p^r} . \label{eq:CindB}
  \end{equation}
  In particular,
  \begin{equation*}
    \binom{n p^r  + (j - n p^r)}{j - n p^r} \equiv \binom{n p^{r - 1} + [j /
     p] - n p^{r - 1}}{[j / p] - n p^{r - 1}} \pmod{p^r},
  \end{equation*}
  which is equivalent to
  \begin{equation}
    \binom{j}{n p^r} \equiv \binom{[j / p]}{n p^{r - 1}} \pmod{p^r} . \label{eq:CindC}
  \end{equation}
  The proof thus follows upon combining \eqref{eq:binomind}, \eqref{eq:CindB},
  \eqref{eq:CindC}.
\end{proof}

\begin{proof}[Proof of Theorem \ref{thm:sc-s7}]
  Adapting the original approaches of \cite{beukers-apery85} and \cite{gessel},
  we split the binomial sum \eqref{eq:s7x} as
  \begin{equation*}
    \mathcal{S} (m p^r ; A, B, C) = \sum_{s \geq 0} G_s (m p^r),
  \end{equation*}
  where
  \begin{equation*}
    G_s (n) =  \sumprime_k \mathcal{B}(n, kp^s).
    \end{equation*}
  It follows from Lemma \ref{lem:Bind} that, for $s \geq 1$,
  \begin{equation*}
    G_s (m p^r) \equiv G_{s - 1} (m p^{r - 1}) \pmod{p^{3
     r}} .
  \end{equation*}
  It therefore remains to show that
  \begin{equation}
    G_0 (m p^r) =  \sumprime_k  \mathcal{B}(mp^r, k) \equiv 0 \pmod{p^{3 r}} .
    \label{eq:G0}
  \end{equation}
  Note that
  \begin{equation}
    \binom{m p^r}{k} = \frac{m p^r}{k} \binom{m p^r - 1}{k - 1}
    \label{eq:binom1}
  \end{equation}
  is divisible by $p^r$ if $p \nmid k$. Hence, if $A \geq 3$ then
  \eqref{eq:G0} is obviously true and \eqref{eq:sc-ABC} follows.
  
  In the remainder, we consider the case $A = 2$. With \eqref{eq:binom1}
  substituted into \eqref{eq:G0}, we find that we need to show that
  \begin{equation}
    \sumprime_k \frac{1}{k^2} \binom{m p^r - 1}{k - 1}^2 \binom{m p^r  +
    k}{k}^B \binom{2 k}{m p^r}^C \equiv 0 \pmod{p^r} .
    \label{eq:G0k}
  \end{equation}
  If $p \nmid k$ then $[(k - 1) / p] = [k / p]$ so that, by
  Lemma~\ref{lem:binomind},
  \begin{equation}
    \binom{m p^r  - 1}{k - 1}^2 \equiv \binom{m p^{r - 1} - 1}{[k / p]}^2
    \pmod{p^r} . \label{eq:binomind2}
  \end{equation}
  By \eqref{eq:binomind2} and Lemma~\ref{lem:Cind} with $A = 0$, the left-hand
  side of \eqref{eq:G0k} is congruent modulo $p^r$ to
  \begin{equation} \label{step}
    \sumprime_k \frac{1}{k^2} \binom{m p^{r - 1} - 1}{[k / p]}^2
     \binom{mp^{r - 1} + [k / p]}{[k / p]}^B \binom{[2 k / p]}{m p^{r - 1}}^C.
  \end{equation}
  Using the notation of \eqref{eq:Cnkj} and (\ref{step}), congruence \eqref{eq:G0k} is
  equivalent to
  \begin{equation}
     \sumprime_k \frac{1}{k^2} \mathcal{C} (m p^{r - 1}, [k / p], [2 k /
    p]) \equiv 0 \pmod{p^r} . \label{eq:sumC1}
  \end{equation}
  In order to establish \eqref{eq:sumC1}, we now show that
  \begin{equation}
     \sumprime_k \frac{1}{k^2} \mathcal{C} (m p^r, k, 2 k) \equiv  \sumprime_k \frac{1}{k^2} \mathcal{C} (m p^{r - s}, [k / p^s], [2 k / p^s]) \pmod{p^r}
    \label{eq:sumCs0}
  \end{equation}
  for $s = 0$, $1$, $\ldots, r$. The case $s = 0$ is trivial, while the case $s = 1$ follows from
  Lemma~\ref{lem:Cind}. If we now let $\{k : p^s \} := k - p^s [k / p^s]$, the remainder of $k$ divided by $p^s$, then observe that
  \begin{equation*}
    [2 k / p^s] = 2 [k / p^s] + \left\{ \begin{array}{ll}
       1, & \text{if $\{k : p^s \} > p^s / 2$,}\\
       0, & \text{otherwise} .
     \end{array} \right.
  \end{equation*}
  Hence,
  \begin{align} \label{sumCsx}
    \sumprime_k \frac{1}{k^2} \mathcal{C} (m p^{r - s}, [k / p^s],
    [2 k / p^s]) & = \sum_n  \sumprime_{[k / p^s] = n} \frac{1}{k^2} \mathcal{C}
    (mp^{r - s} , [k / p^s], [2 k / p^s]) \nonumber\\
    & =  \sum_n \mathcal{C} (m p^{r - s}, n, 2 n)
     \sumprime_{\substack{[k / p^s] = n \\
      \{k : p^s \} < p^s / 2}} \frac{1}{k^2} \nonumber\\
    & + \sum_n \mathcal{C} (mp^{r - s}, n, 2 n + 1)
    \sumprime_{\substack{[k / p^s] = n \\
      \{k : p^s \} > p^s / 2}} \frac{1}{k^2} .  
  \end{align}
  It follows from \eqref{eq:powersummod2} of Lemma~\ref{lem:powersummod} that
  each of the inner sums in the last expression of (\ref{sumCsx}) is divisible by $p^s$. Suppose
  that $s < r$. Thus, by (\ref{sumCsx}) and Lemma~\ref{lem:Cind}, we now have
  \begin{align*}
   &  \sumprime_k \frac{1}{k^2}  \mathcal{C} (m p^{r - s}, [k / p^s],
    [2 k / p^s]) \\
    & \equiv \sum_n \mathcal{C}(m p^{r - s - 1}, [n / p], [2 n / p])
     \sumprime_{\substack{[k / p^s] = n \\
      \{k : p^s \} < p^s / 2}} \frac{1}{k^2} \\
      & +  \sum_n \mathcal{C} (m p^{r - s - 1}, [n / p], [(2 n + 1) / p])
     \sumprime_{\substack{[k / p^s] = n \\ \{k : p^s \} > p^s / 2}} \frac{1}{k^2} \pmod{p^r} \\
    & \equiv \sum_n  \sumprime_{\substack{[k / p^s] = n \\ \{k : p^s \} < p^s / 2}} \frac{1}{k^2} \mathcal{C}(m p^{r - s - 1}, [[k / p^s] / p], [[2 k / p^s] / p])\\
    &  + \sum_n  \sumprime_{\substack{[k / p^s] = n \\ \{k : p^s \} > p^s / 2}} \frac{1}{k^2} \mathcal{C} (m p^{r - s - 1}, [[k / p^s] / p], [[2 k / p^s] / p]) \pmod{p^r}\\
    & \equiv  \sumprime_k \frac{1}{k^2} \mathcal{C} (m p^{r - s - 1}, [k /
    p^{s + 1}], [2 k / p^{s + 1}]) \pmod{p^r}.
  \end{align*}
  Hence \eqref{eq:sumCs0} follows by induction on $s$. Moreover, the case $s =
  r$ in (\ref{sumCsx}) shows that, for $s = r$ and hence all $s = 0, 1,
  \ldots, r$, the sums in \eqref{eq:sumCs0} are divisible by $p^r$. In
  particular, the case $s = 1$ proves \eqref{eq:sumC1} and thus \eqref{eq:sc-ABC}. 
\end{proof}

\begin{proof}[Proof of Theorem \ref{thm:sc-s18}]
  As in \cite{os2}, we use the identity
  \begin{equation*}
    \binom{a - b}{c - d} \binom{b}{d} = \frac{\binom{a}{c} \binom{c}{d}
     \binom{a - c}{b - d}}{\binom{a}{b}},
  \end{equation*}
  to obtain that
  \begin{equation*}
    \binom{2 k}{k} \binom{2 (n - k)}{n - k} = \frac{\binom{2 n}{n}
     \binom{n}{k}^2}{\binom{2 n}{2 k}} = \binom{n}{k}  \frac{(2 k) ! (2 (n -
     k)) !}{n!k! (n - k) !} = \binom{n}{k} S (n - k, k),
  \end{equation*}
  where $S (m, n)$ are the {\emph{super Catalan numbers}}
  \begin{equation*}
    S (m, n) = \frac{(2 m) ! (2 n) !}{m!n! (m + n) !} .
  \end{equation*}
  We refer to \cite{gessel-ballot} and the references therein for the
  history and properties of these numbers. Here, we only need that $S (n - k,
  k)$ is an integer; in fact, it is an even integer if $n \geq 1$.
  
  Denote the summand of \eqref{s18} by $\mathcal{D} (n, k)$, that is
  \begin{equation*}
    \mathcal{D} (n, k) = (- 1)^k \binom{n}{k}^2 S (n - k, k) \left[ \binom{2
     n - 3 k - 1}{n} + \binom{2 n - 3 k}{n} \right],
  \end{equation*}
  so that
  \begin{equation*}
    s_{18} (m p^r) = \sum_{s \geq 0} \sumprime_k \mathcal{D} (m
     p^r, k p^s) .
  \end{equation*}
  In analogy with Lemma~\ref{lem:Bind}, we claim that, for primes $p \geq
  5$ and integers $r, s \geq 1$,
  \begin{equation}
    \mathcal{D} (m p^r, k p^s) \equiv \mathcal{D} (m p^{r - 1}, k p^{s - 1})
    \pmod{p^{3 r}} . \label{eq:Dind-s18}
  \end{equation}
  A direct application of Lemma~\ref{lem:jacobsthal} shows that
  \begin{equation*}
    \binom{2 m p^r - 3 k p^s}{m p^r} / \binom{2 m p^{r - 1} - 3 k p^{s -
     1}}{m p^{r - 1}} \equiv 1 \pmod{p^{r + 2 \min (r, s)}}
  \end{equation*}
  as well as
  \begin{equation*}
    S (m p^r - k p^s, k p^s) / S (m p^{r - 1} - k p^{s - 1}, k p^{s - 1})
     \equiv 1 \pmod{p^{r + 2 \min (r, s)}} .
  \end{equation*}
  On the other hand, for all integers $a$ and integers $b \geq 0$,
  \begin{equation}
    \binom{a + b}{b} = (- 1)^b \binom{- a - 1}{b},
  \end{equation}
  so that
  \begin{equation*}
    \binom{2 n - 3 k - 1}{n} = (- 1)^n \binom{3 k - n}{n} .
  \end{equation*}
  Hence, Lemma~\ref{lem:jacobsthal} implies that
  \begin{equation*}
    \binom{2 m p^r - 3 k p^s - 1}{m p^r} / \binom{2 m p^{r - 1} - 3 k p^{s -
     1} - 1}{m p^{r - 1}} \equiv 1 \pmod{p^{r + 2 \min (r,s)}} .
  \end{equation*}
  Proceeding as in the proof of Lemma~\ref{lem:Bind}, we therefore obtain
  \eqref{eq:Dind-s18}. The congruence \eqref{eq:sc-s18} then follows if we can
  prove that, for integers $k$ such that $p \nmid k$,
  \begin{equation*}
    \mathcal{D} (m p^r, k) \equiv 0 \pmod{p^{2 r}} .
  \end{equation*}
  This is an immediate consequence of the fact that $\mathcal{D} (n, k)$ is
  divisible by $\binom{n}{k}^2$.
  
  Finally, let us briefly indicate how to obtain the corresponding congruences
  in the case that $p = 2$ or $p = 3$. For $p = 3$, congruence
  \eqref{eq:jacobsthal} of Lemma~\ref{lem:jacobsthal} only holds modulo $p^{r
  + s + \min (r, s) - 1}$. The same arguments as above then show that
  congruence \eqref{eq:Dind-s18} holds modulo $p^{3 r - 1}$, and hence modulo
  $p^{2 r}$, for $p = 3$. The case $p = 2$ requires some more attention. In
  that case, the counterpart of congruence \eqref{eq:jacobsthal} is
  \begin{equation*}
    \binom{2^r a}{2^s b} / \binom{2^{r - 1} a}{2^{s - 1} b} \equiv
     \varepsilon \pmod{2^{r + s + \min (r, s) - 2}}
  \end{equation*}
  with $\varepsilon = - 1$, if $2^{r - 1} a \equiv 0$, $2^{s - 1} b \equiv 1$
  modulo $2$, and $\varepsilon = 1$ otherwise. Hence, applying the same
  arguments as for $p > 2$ shows that
  \begin{equation*}
    \mathcal{D} (2^r m, 2^s k) = \lambda \mathcal{D} (2^{r - 1} m, 2^{s - 1}
     k)
  \end{equation*}
  with $\lambda \equiv \pm 1$ modulo $2^{r + 2 \min (r, s) - 2}$ and, hence,
  $\lambda \equiv 1$ modulo $2$. Moreover, both sides are divisible by $2^{2
  \max (r - s, 0) + 1}$ because $\binom{2^r m}{2^s k}$ is divisible by $2^{r -
  s}$, if $r \geq s$, and the super Catalan numbers $S (n - k, k)$ are
  even when $n \geq 1$. In the cases $r = 1$ or $s = 1$, this suffices to
  conclude that \eqref{eq:Dind-s18} holds modulo $p^{2 r}$ for $p = 2$. On the
  other hand, if $r \geq 2$ and $s \geq 2$, then going through the
  above computations reveals that $\lambda \equiv 1$ modulo $2^{r + 2 \min (r,
  s) - 2}$, and hence modulo $2^{r + \min (r, s)}$. Together with the
  divisibility of $\mathcal{D} (2^r m, 2^s k)$ by $2^{r - s}$, if $r \geq
  s$, we again find that \eqref{eq:Dind-s18} holds modulo $p^{2 r}$ for $p =
  2$.
\end{proof}

\section{Comments on direct generalizations}\label{sec:x}

The approaches of the proof of Theorems~\ref{thm:sc-s7} and \ref{thm:sc-s18},
which are based on \cite{gessel} and \cite{beukers-apery85}, generalize
easily to other sequences.

\begin{example}\label{eg:eta}
  For instance, consider the sequence
  \begin{equation}\label{eq:eta}
  Z(n)= \sum_{k = 0}^n (- 1)^k \binom{n}{k}^3 \left( \binom{4 n -5 k - 1}{3 n} + \binom{4 n - 5 k}{3 n} \right),
  \end{equation}
  which is case $(\eta)$ in \cite{avsz} (see also Table \ref{tbl:sporadic3}
  in Section~\ref{sec:concl}).
  We claim that the proof of Theorem~\ref{thm:sc-s18} naturally extends to show that, for primes $p\geq5$,
  \begin{equation}\label{eq:sc-eta}
  Z(mp^r) \equiv Z(mp^{r-1}) \pmod{p^{3r}}.
  \end{equation}

  Similar to the proof of Theorem~\ref{thm:sc-s18}, write
  \begin{equation*}
  Z(mp^r)=\sum_{s \geq 0} \sumprime_k \mathcal{A}(mp^r, kp^s),
  \end{equation*}
  where 
  \begin{equation*}
  \mathcal{A}(n,k)= (- 1)^k \binom{n}{k}^3 \left( \binom{4 n -5 k - 1}{3 n} + \binom{4 n - 5 k}{3 n} \right).
  \end{equation*}
  As in the proof of Theorem~\ref{thm:sc-s18}, we find that for primes $p \geq 5$,
  \begin{equation*}
  \mathcal{A}(mp^r, kp^s) \equiv \mathcal{A}(mp^{r-1}, kp^{s-1}) \pmod{p^{3r}}.
  \end{equation*} 
  On the other hand, the presence of $\binom{n}{k}^3$ shows that, for $p \nmid k$,
  \begin{equation*}
  \mathcal{A}(mp^r, k) \equiv 0 \pmod{p^{3r}}.
  \end{equation*}
  Combining these two congruences, we conclude that the supercongruence \eqref{eq:sc-eta} indeed holds.
\end{example}

\begin{example}
  The proof of Theorem~\ref{thm:sc-s18} directly generalizes to
  supercongruences for the following family of sequences which includes
  $s_{18}(n)$ as the case $(A,B,C,D,E)=(1,1,1,1,1)$.  For nonnegative integers
  $A,B,C,D,E$, define $\mathcal{T} (n; A, B, C, D, E)$ to be the sequence
  \begin{equation*}
    \sum_{k = 0}^{[n / 3]} (- 1)^k
     \binom{n}{k}^{A} \binom{2 k}{k}^{B} \binom{2 (n - k)}{n - k}^{C}
     \left[ \binom{2 n - 3 k - 1}{n}^{D} + \binom{2 n - 3 k}{n}^{E} \right].
  \end{equation*}
  If $A\geq1$, $B\geq1$ and $C\geq1$, then
  \begin{equation*} \label{eq:sc-ABCDE}
   \mathcal{T} (m p^r; A, B, C, D, E) \equiv \mathcal{T} (m p^{r - 1};
     A, B, C, D, E) \pmod{p^{2 r}}
  \end{equation*}
  for all primes $p$. More generally, we have, for instance, that if $A\geq2$, $B\geq1$ and $C\geq1$, then
  \begin{equation*}
   \mathcal{T} (m p^r; A, B, C, D, E) \equiv \mathcal{T} (m p^{r - 1};
     A, B, C, D, E) \pmod{p^{3 r}}
  \end{equation*}
  for all primes $p\geq5$.
\end{example}

\begin{example}
  In \cite{cd-binomial06} and \cite{cd-binomial09}, the sequence
  \begin{equation*}
    u (n) = \sum_{k = 0}^n (- 1)^k \binom{n}{k} \binom{2 n}{k}
  \end{equation*}
  is studied. In particular, it is proved that $u (m p) \equiv u (m)$ modulo
  $p^3$ for all primes $p \geq 5$. Following the approach of
  Theorem~\ref{thm:sc-s7}, we obtain that, more generally,
  \begin{equation*}
    u (m p^r) \equiv u (m p^{r - 1}) \pmod{p^{3 r}}
  \end{equation*}
  for all primes $p \geq 5$. Let $a, b$ be nonnegative integers. It is
  shown in \cite{cd-binomial06} that the more general sequences
  \begin{equation*}
    u^{\varepsilon}_{a, b} (n) = \sum_{k = 0}^n (- 1)^{\varepsilon k}
     \binom{n}{k}^a \binom{2 n}{k}^b
  \end{equation*}
  satisfy, for primes $p \geq 5$, the congruence
  \begin{equation*}
    u^{\varepsilon}_{a, b} (p) \equiv u^{\varepsilon}_{a, b} (1) \pmod{p^3}
  \end{equation*}
  unless $(\varepsilon, a, b) = (0, 0, 1)$ or $(0, 1, 0)$. Again, we can
  generalize this congruence by using the approach of Theorem~\ref{thm:sc-s7}
  to show that, for primes $p \geq 5$,
  \begin{equation*}
    u_{a, b}^{\varepsilon} (m p^r) \equiv u_{a, b} (m p^{r - 1}) \pmod{p^{3 r}}
  \end{equation*}
  provided that $a + b \geq 2$.
\end{example}

\section{Concluding remarks}\label{sec:concl}

There are several directions for future work. First, we have numerically
checked that each of the six sporadic examples of Zagier (labelled \textbf{A},
\textbf{B}, \textbf{C}, \textbf{D}, \textbf{E}, \textbf{F} in \cite{zagier})
and the six sporadic examples in \cite{avsz} (labelled $(\delta)$, $(\eta)$,
$(\alpha)$, $(\epsilon)$, $(\zeta)$, $(\gamma)$) satisfies a two-term
supercongruence. Precisely, cases \textbf{A} and \textbf{D} are modulo $p^{3
r}$ while cases \textbf{B}, \textbf{C}, \textbf{E}, \textbf{F} are modulo $p^{2
r}$. All six cases from \cite{avsz} are modulo $p^{3r}$.  Cases \textbf{A} and
\textbf{D} have been proven by Coster \cite{coster} and cases \textbf{C} and
\textbf{E} were settled by the first two authors in \cite{os1} and \cite{os2}.
Cases $(\alpha)$ and $(\gamma)$ have been proven in \cite{os2} and
\cite{coster}, respectively. As mentioned in the introduction, the proof of
Theorem \ref{thm:sc-s7} implies case $(\epsilon)$. On the other hand, case $(\eta)$
is proven in Example~\ref{eg:eta} of Section~\ref{sec:x}. For a discussion concerning connections
between the six sporadic examples in \cite{avsz} and \cite{zagier}, see Theorem 4.1
in \cite{avsz} or Theorem 3.5 and Tables 1 and 2 in \cite{chancooper}.

The information on supercongruences for Ap\'ery-like numbers is summarized in
Tables~\ref{tbl:sporadic2} and \ref{tbl:sporadic3}. The value for $k$ indicates
that the sequence $A (n)$ (at least conjecturally in cases $\textbf{B}$,
$\textbf{F}$, $(\delta)$ and $(\zeta)$) satisfies the supercongruence
\begin{equation*}
  A (m p^r) \equiv A (m p^{r - 1}) \pmod{p^{k r}}
\end{equation*}
for primes $p \geq 5$. In the cases where this congruence has been proven, a
reference is indicated in the final column. It would be of interest to prove
cases $\textbf{B}$, $\textbf{F}$, $(\delta)$ and $(\zeta)$ and, more generally,
provide a framework for all two-term supercongruences.

\begin{table}[h]
  {\tabulinesep=1mm
  \begin{tabu}{|l|c|c|l|l|c|}
    \hline
    $(a, b, c)$ & \cite{zagier} & \cite{avsz} & $A (n)$ & $k$ & \\
    \hline
    $(7, 2, - 8)$ & {\textbf{A}} & (a) & \scaleeq{\sum_k \binom{n}{k}^3} & $3$ &
    \cite{coster}\\
    \hline
    $(11, 3, - 1)$ & {\textbf{D}} & (b) & \scaleeq{\sum_k \binom{n}{k}^2 \binom{n +
    k}{n}} & $3$ & \cite{coster}\\
    \hline
    $(10, 3, 9)$ & {\textbf{C}} & (c) & \scaleeq{\sum_k \binom{n}{k}^2 \binom{2
    k}{k}} & $2$ & \cite{os1}\\
    \hline
    $(12, 4, 32)$ & {\textbf{E}} & (d) & \scaleeq{\sum_k \binom{n}{k} \binom{2 k}{k}
    \binom{2 (n - k)}{n - k}} & $2$ & \cite{os2}\\
    \hline
    $(9, 3, 27)$ & {\textbf{B}} & (f) & \scaleeq{\sum_k (- 1)^k 3^{n - 3 k}
    \binom{n}{3 k} \frac{(3 k) !}{k!^3}} & $2$ & \\
    \hline
    $(17, 6, 72)$ & {\textbf{F}} & (g) & \scaleeq{\sum_{k, l} (- 1)^k 8^{n - k}
    \binom{n}{k} \binom{k}{l}^3} & $2$ & \\
    \hline
  \end{tabu}}
  \caption{\label{tbl:sporadic2}Sporadic sequences from \cite{zagier} for
  \eqref{eq:zsearch}}
\end{table}

\begin{table}[h]
  {\tabulinesep=1mm
  \begin{tabu}{|l|c|l|l|c|}
    \hline
    $(a, b, c, d)$ & \cite{avsz}, \cite{cooper} & $A (n)$ & $k$ & \\
    \hline
    $(7, 3, 81, 0)$ & ($\delta$) & \scaleeq{\sum_k (- 1)^k 3^{n - 3 k} \binom{n}{3 k}
    \binom{n + k}{n} \frac{(3 k) !}{k!^3}} & $3$ & \\
    \hline
    $(11, 5, 125, 0)$ & ($\eta$) &
    defined in \eqref{eq:eta}
& $3$ & \eqref{eq:sc-eta} \\
    \hline
    $(10, 4, 64, 0)$ & ($\alpha$) & \scaleeq{\sum_k \binom{n}{k}^2 \binom{2 k}{k}
    \binom{2 (n - k)}{n - k}} & $3$ & \cite{os2}\\
    \hline
    $(12, 4, 16, 0)$ & ($\epsilon$) & \scaleeq{\sum_k \binom{n}{k}^2 \binom{2 k}{n}^2}
    & $3$ & \eqref{eq:sc-ABC}\\
    \hline
    $(9, 3, - 27, 0)$ & ($\zeta$) & \scaleeq{\sum_{k, l} \binom{n}{k}^2 \binom{n}{l}
    \binom{k}{l} \binom{k + l}{n}} & $3$ & \\
    \hline
    $(17, 5, 1, 0)$ & ($\gamma$) & \scaleeq{\sum_k \binom{n}{k}^2 \binom{n + k}{n}^2}
    & $3$ & \cite{coster}\\
    \hline
    $(6, 2, - 64, 4)$ & $s_{10}$ & \scaleeq{\sum_k \binom{n}{k}^4} & $3$ &
    \cite{coster}\\
    \hline
    $(13, 4, - 27, 3)$ & $s_7$ & \scaleeq{\sum_k \binom{n}{k}^2 \binom{n + k}{k}
    \binom{2 k}{n}} & $3$ & \eqref{eq:sc-ABC}\\
    \hline
    $(14, 6, - 192, 12)$ & $s_{18}$ &
    defined in \eqref{s18}
& $2$ &  \eqref{eq:sc-s18} \\
    \hline
  \end{tabu}}
  \caption{\label{tbl:sporadic3}Sporadic sequences from \cite{avsz} and
  \cite{cooper} for \eqref{eq:csearch}}
\end{table}

Secondly, for each of the 15 sporadic cases in
Tables~\ref{tbl:sporadic2} and \ref{tbl:sporadic3}, one could ask if there
exists a modular form $f (z)$ whose $p$-th Fourier coefficient satisfies a
modular supercongruence. This is true for cases \textbf{D} \cite{ahlgren} and
$(\gamma)$ \cite{ao}. 

Finally, it also appears that all known Ramanujan-type series for $1 / \pi^a$,
$a \geq 1$, have a $p$-adic analogue which satisfies a Ramanujan-type
supercongruence. Different techniques have been employed as there is currently
no general explanation for this occurrence. For example, the first author and
McCarthy \cite{mo} utilized Gaussian hypergeometric series and Whipple's
transformation to prove (\ref{padicram}). Zudilin \cite{zudilin} proved several
Ramanujan-type supercongruences using the Wilf--Zeilberger method while Long
\cite{long} used a combination of combinatorial identities, $p$-adic analysis
and transformations and ``strange'' evaluations of ordinary hypergeometric
series due to Whipple, Gessel and Gosper. 

\begin{acknowledgements}
  The first author would like to thank Frits Beukers and Matthijs Coster for
  their continued interest. The second author is partially funded by SERB grant SR/FTP/MS-053/2012. The third author would like to thank the
  Max-Planck-Institute for Mathematics in Bonn, where he was in residence when
  this work was completed, for providing wonderful working conditions. Finally, we thank the referee for their helpful comments and suggestions.
\end{acknowledgements}

\end{document}